\newtheorem{lemma}{Lemma}
\newtheorem{theorem}{Theorem}
\newtheorem{remark}{Remark}
\newtheorem{example}{Example}
\title{A Fourier-Bessel method with a regularization strategy for the boundary value problems of the Helmholtz equation}
\author{
Deyue Zhang\footnote{School of Mathematics, Jilin University, Changchun,
P. R. China. {\it dyzhang@jlu.edu.cn}}, \ 
Fenglin Sun\footnote{School of Mathematics, Jilin University, Changchun,
P. R. China.},\ 
Yan Ma\footnote{School of Mathematics, Jilin University, Changchun,
P. R. China.} \ and
Yukun Guo\footnote{Department of Mathematics, Harbin Institute of Technology, Harbin, P. R. China. {\it ykguo@hit.edu.cn} (Corresponding author)}
}
\date{}
\begin{document}

\maketitle

\begin{abstract}
This paper is concerned with the Fourier-Bessel method for the boundary value problems of the Helmholtz equation in a smooth simply connected domain. Based on the denseness of Fourier-Bessel functions, the problem can be approximated by determining the unknown coefficients in the linear combination. By the boundary conditions, an operator equation can be obtained. We derive a lower bound for the smallest singular value of the operator, and obtain a stability and convergence result for the regularized solution with a suitable choice of the regularization parameter. Numerical experiments are also presented to show the effectiveness of the proposed method.
\end{abstract}

\section{Introduction}

The boundary value problems (BVP) of the Helmholtz equation appear in many scientific fields and engineering applications, such as wave propagation, vibration, electromagnetic scattering and so on. The properties of the solution to the BVP have been studied widely, and many numerical methods have been proposed to solve the BVP, such as the finite element method \cite{AB84, Ciarlet78, Rukavishnikov13}, the finite difference method \cite{MG80, Thomas95}, the boundary integral equation method \cite{Chen,Colton13, Kress}, and etc.

This paper concerns with the BVP of the Helmholtz equation in a smooth simply connected domain, and a Fourier-Bessel method (FBM) is considered to solve the problems. The FBM was the first to be presented in
\cite{Zhang, Liu} to solve the Cauchy problems for the Helmholtz equation. The main idea is to approximate the exact solution by a linear combination of the Fourier-Bessel functions. Here, we make use of the idea to
solve the BVP of the Helmholtz equation. And the problems are approximated by determining the unknown
coefficients in the linear combination. By using the boundary conditions, an operator equation is easily obtained, which can be solved by a regularization method since the operator is compact and injective.

The main purpose of this paper is to provide a stability analysis of the FBM for solving the BVP. In this paper, we propose an approach to derive a lower bound for the smallest singular value of the operator,
and obtain a stability and convergence result for the regularized solution with a suitable choice of the regularization parameter. We emphasize that our idea does work for arbitrary domains with smooth boundaries, and the idea can be generalized to other equations.

This paper is organized as follows. In Section 2, we present the Fourier-Bessel approximation of the solution to the BVP, and an operator equation for the coefficients. In Section 3, we derive a lower bound for the smallest singular value of the operator, and solve the equation by the Tikhonov regularization method. A convergence and stability result is obtained with a suitable choice of the regularization parameter. Finally, several numerical examples are included to show the effectiveness of our method.


\section{The harmonic polynomial method}

Let $D\subset\mathbb{R}^2$ be an open, bounded and simply connected domain with a $C^{\infty}$ boundary $\Gamma$ (see \cite{Evans}) .

Consider the following BVP: Given $f\in L^2(\Gamma)$, find $u$ such that $u$ satisfies
\begin{align}
&\Delta u+k^2u=0, \quad\text{in}\  D \label{eqnlaplace}\\
&\frac{\partial u}{\partial \nu}+\mathrm{i} k u=f, \quad \text{on}\ \Gamma \label{BoundaryR}
\end{align}
where $k>0$ is the wavenumber and $\nu$ is the unit normal to the boundary $\Gamma$ directed into the exterior of $D$.

Now, we review the FBM. Recall that for $n\in \mathbb{Z}$ the Fourier-Bessel functions are
\begin{equation}\label{FB}
\varphi_n(x):= \frac{2^{|n|}|n|!}{k^{|n|}M^{|n|}}J_n(k r) \mathrm{e}^{\mathrm{i} n\theta},
\end{equation}
where $J_n(t)$ is the Bessel function of the first kind of order $n$, under the polar coordinates $(r,\theta): x=(r\cos \theta, r\sin\theta)$, and the constant $M>r_D=\max\limits_{x\in \overline{D}}|x|$. Then, based on the idea of the FBP, an approximate solution $u_N$ for the BVP \eqref{eqnlaplace}-\eqref{BoundaryR} can be expressed by the following linear combination
\begin{equation}\label{FBSolution}
u_N(x)=\sum_{n=-N}^N c_n\varphi_n(x),
\end{equation}
where $c_n$ ($n\in\mathbb{Z}$) are constants.

To determine the parameters $c_n$, by using the boundary conditions, we derive and solve the following equations
\begin{equation}\label{Eqn}
A_N \mathbf{c}_N=f, \quad \text{on}\ \Gamma,    
\end{equation}
where $\mathbf{c}_N \in \mathbb{C}^{2N+1}$, and the trace operator $A_N:\mathbb{C}^{2N+1}\to L^2(\Gamma)$ is defined by
\begin{equation}\label{Operator}
(A_N \mathbf{c}_N)(x):=\mathrm{i} k \sum\limits_{n=-N}^N c_n\varphi_n(x)+\dfrac{\partial}{\partial
\nu}\sum\limits_{n=-N}^N c_n\varphi_n(x), \quad x \in \Gamma.
\end{equation}

In the following, we present the approximation result of Fourier-Bessel functions and the property of the operator $A_N$.

In the paper \cite{Zhang}, we have proved the following two lemmas.

\begin{lemma}\label{lemma 2.1}
Let $u\in H^{3/2}(D)$ satisfy the Helmholtz equation. Let $B$ be a bounded and simply connected domain with  $\partial B \in C^2$ such that $\overline{D}\subset\subset B$. Then for every $\varepsilon>0$, there exists a single-layer potential $v_{\psi}$ of the form
\begin{equation}\label{potential}
v_{\psi}(x):=\int_{\partial B}\Phi(x,y)\psi(y)\mathrm{d} s(y), \quad x \in B
\end{equation}
for some $\psi\in L^2(\partial B)$, where $\Phi(x,y)$ is the fundamental solution to the Helmholtz equation, such that
\begin{equation*}
  \|v_{\psi}-u\|_{H^{3/2}(D)}\leq \varepsilon,
\end{equation*}
and especially
\begin{equation*}
  \|v_{\psi}-u\|_{L^2(\Gamma)}+
  \left\|\frac{\partial v_{\psi}}{\partial \nu}-\frac{\partial u}{\partial \nu}\right\|_{L^2(\Gamma)}
  \leq \varepsilon.
\end{equation*}
\end{lemma}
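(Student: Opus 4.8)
The plan is to prove the denseness of the single-layer potentials by a duality argument combined with unique continuation, and then to obtain the two displayed estimates from a single $H^{3/2}(D)$ estimate. Let $U:=\{u\in H^{3/2}(D):\Delta u+k^2u=0\ \text{in}\ D\}$, a closed subspace of $H^{3/2}(D)$, and let $W$ be the closure in $H^{3/2}(D)$ of the restrictions $\{v_\psi|_D:\psi\in L^2(\partial B)\}$. Since $\overline D\subset\subset B$, every $v_\psi$ is smooth on $\overline D$ and solves the Helmholtz equation there, so $W\subset U$; the goal is $W=U$. Granting this, for the given $u\in U$ and any $\varepsilon'>0$ one may choose $\psi$ with $\|v_\psi-u\|_{H^{3/2}(D)}\le\varepsilon'$. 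Since $v_\psi-u\in U$, the second estimate follows from the boundedness of the Dirichlet trace $H^{3/2}(D)\to H^1(\Gamma)\hookrightarrow L^2(\Gamma)$ and of the Neumann trace on $U$ into $L^2(\Gamma)$ (for $v\in U$ one has $\Delta v=-k^2v\in L^2(D)$, so $\partial_\nu v|_\Gamma$ is well defined and controlled by $\|v\|_{H^{3/2}(D)}$). Taking $\varepsilon'=\varepsilon/C$ with $C$ the combined trace norm settles both inequalities at once.

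To prove $W=U$ I would argue by contradiction via Hahn--Banach: if $W\subsetneq U$, there is $g\in(H^{3/2}(D))^{\ast}=\widetilde H^{-3/2}(D)$, a distribution supported in $\overline D$, with $\langle g,v_\psi\rangle=0$ for all $\psi\in L^2(\partial B)$ but $\langle g,u_0\rangle\neq 0$ for some $u_0\in U$. Define the potential $w(y):=\langle g_x,\Phi(x,y)\rangle$ for $y\in\Omega_e:=\mathbb R^2\setminus\overline D$. Because $\mathrm{dist}(\overline D,\partial B)>0$, the map $y\mapsto\Phi(\cdot,y)|_D$ is smooth from $\partial B$ into $H^{3/2}(D)$, so $v_\psi|_D$ is a Bochner integral over $\partial B$ and $g$ commutes with it; hence $\int_{\partial B}w(y)\psi(y)\,\mathrm{d}s(y)=0$ for all $\psi$, i.e. $w=0$ on $\partial B$. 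Differentiating under the pairing shows $w$ solves the Helmholtz equation in $\Omega_e$, and it satisfies the Sommerfeld radiation condition since $\Phi(x,\cdot)$ does uniformly for $x$ in the compact set $\overline D$.

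Next I would propagate the vanishing of $w$. A radiating solution on the exterior of $B$ with zero Dirichlet data on $\partial B$ vanishes on $\mathbb R^2\setminus\overline B$ (Rellich's lemma and unique continuation, valid for every $k>0$). Since $D$ is simply connected, $\Omega_e$ is connected, so unique continuation upgrades this to $w\equiv 0$ in all of $\Omega_e$; in particular $w$ is supported in $\overline D$. I would then compute $\langle g,u_0\rangle$. Writing $g=-(\Delta+k^2)w$ and using that $w$ vanishes outside $\overline D$, the distribution $g$ decomposes into a volume part on $D$ together with single- and double-layer terms on $\Gamma$ carrying the interior Cauchy data $(w^-,\partial_\nu w^-)$. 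Pairing with $u_0$ and applying Green's second identity in $D$, the boundary contributions of the volume part cancel \emph{exactly} against the layer terms, precisely because $u_0$ solves the homogeneous Helmholtz equation; this gives $\langle g,u_0\rangle=0$, the desired contradiction. The merit of this route is that no spectral assumption on $k$ is needed.

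The main obstacle is this last step at the borderline regularity in play: $g\in H^{-3/2}$, hence $w\in H^{1/2}_{\mathrm{loc}}$, for which traces on $\Gamma$ and the integrations by parts are not classically defined. I would handle it by an approximation argument, establishing the cancellation first for smooth densities, where $w$ is smooth across $\Gamma$ and Green's identity is classical, and then passing to the limit using the continuity of the duality pairing, rather than manipulating traces of $H^{1/2}$ functions directly. The remaining ingredients, namely the Bochner-integral interchange, exterior uniqueness, unique continuation across the connected $\Omega_e$, and the trace estimates for the second inequality, are standard and use only that $\partial B\in C^2$, $\Gamma$ is smooth, and $D$ is simply connected.
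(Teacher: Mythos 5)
First, a point of reference: the paper itself contains no proof of this lemma; it is quoted from \cite{Zhang} (``In the paper \cite{Zhang}, we have proved the following two lemmas''), so your attempt can only be compared against the standard argument for results of this type. Your skeleton is indeed that classical argument, and most of it is sound: the reduction of both displayed estimates to a single $H^{3/2}(D)$ bound via the Dirichlet trace and the $L^2(\Gamma)$ Neumann trace (legitimate since $\Delta v=-k^2v\in L^2(D)$ on $U$), the Hahn--Banach separation with $g\in (H^{3/2}(D))^{\ast}=\widetilde H^{-3/2}(D)$, the potential $w(y)=\langle g_x,\Phi(x,y)\rangle$, its vanishing on $\partial B$, exterior Dirichlet uniqueness (valid for every $k>0$), and unique continuation through the connected exterior. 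The genuine gap is the final cancellation step, and the repair you propose does not close it. The only object available to smooth is $g$ itself (it is handed to you by Hahn--Banach), and mollifying $g$ destroys precisely the two facts the cancellation rests on: $\mathrm{supp}\,g\subset\overline D$ and $w\equiv 0$ outside $\overline D$. For mollified potentials $w_n$ the Green's-identity boundary terms $\int_\Gamma(u_0\,\partial_\nu w_n-w_n\,\partial_\nu u_0)\,\mathrm{d}s$ involve traces of $w_n$ on $\Gamma$, and these are not controlled by the only convergence you have, $w_n\to w$ in $H^{1/2}_{\mathrm{loc}}$, because the trace map is not continuous at that regularity; so ``pass to the limit using the continuity of the duality pairing'' is exactly the step that fails. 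Likewise your fallback decomposition of $g$ into a volume part plus layer terms carrying $(w^-,\partial_\nu w^-)$ is vacuous at this regularity, since those Cauchy data do not exist for an $H^{1/2}$ function.

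What actually closes your argument is different from what you describe and requires an input you never invoke: writing $\langle g,u_0\rangle=-\langle w,(\Delta+k^2)\tilde u_0\rangle$ for an extension $\tilde u_0\in H^{3/2}(\mathbb R^2)$, one needs the density theorem $H^{1/2}_{\overline D}=\widetilde H^{1/2}(D)$ for Lipschitz domains (McLean, \emph{Strongly Elliptic Systems and Boundary Integral Equations}, Theorem 3.29) to approximate $w$ in $H^{1/2}(\mathbb R^2)$ by functions in $C_c^\infty(D)$; each of these pairs to zero with $(\Delta+k^2)\tilde u_0$, which vanishes as a distribution on $D$, and then no traces or Green identities are needed at all. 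Alternatively---and this is the standard route for such denseness results, presumably the one followed in \cite{Zhang}---one avoids $H^{-3/2}$ functionals entirely: prove by $L^2(\Gamma)$-duality and the classical jump relations that the impedance traces $\{(\partial_\nu+\mathrm{i}k)v_\psi|_\Gamma:\psi\in L^2(\partial B)\}$ are dense in $L^2(\Gamma)$ (using exterior Dirichlet uniqueness, unique continuation, and uniqueness of the interior impedance problem, all valid for every $k>0$), and then lift to the $H^{3/2}(D)$ estimate by the well-posedness and $H^{3/2}$-regularity of the interior impedance problem---the same estimate from \cite{Chen} that the paper uses in the proof of Theorem \ref{theorem 2.1}. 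Both fixes preserve the merit you point out (no spectral assumption on $k$), but as written your last step is a gap, not a proof.
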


\begin{lemma}\label{lemma2.2}
Assume that $v$ is the solution of the Helmholtz equation in $B_\rho:=\{\bm{x}\in \mathbb{R}^2: |\bm{x}|<\rho\}$, then there exists a sequence $\{u_N^*\}_{N=0}^{\infty}$ of Fourier-Bessel functions of the form \eqref{FBSolution}, such that
\begin{align*}
    & \|v-u_N^*\|_{L^{\infty}(B_r)} \leq C_1 k \tau^{-N}, \\
    & \|\nabla(v-u_N^*)\|_{L^{\infty}(B_r)} \leq C_2 k\tau^{-N},
\end{align*}
where $r<R<\rho, \tau=R/r>1$, and the constants $C_1$ and $C_2$ are independent of $N$ and $k$.
\end{lemma}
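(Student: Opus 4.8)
The plan is to realize $v$ through its cylindrical-wave (Fourier--Bessel) expansion and to take $u_N^*$ to be its symmetric truncation. Since $v$ solves $\Delta v+k^2v=0$ in $B_\rho$, separation of variables in polar coordinates shows that the only solutions regular at the origin are the cylindrical waves $J_n(kr)\mathrm{e}^{\mathrm{i}n\theta}$, so that $v(r,\theta)=\sum_{n\in\mathbb{Z}}a_nJ_n(kr)\mathrm{e}^{\mathrm{i}n\theta}$, the series converging uniformly on compact subsets of $B_\rho$. I would then set $u_N^*(r,\theta):=\sum_{|n|\le N}a_nJ_n(kr)\mathrm{e}^{\mathrm{i}n\theta}$; absorbing the normalization constants of \eqref{FB} into the coefficients via $c_n=a_nk^{|n|}M^{|n|}/(2^{|n|}|n|!)$ shows that $u_N^*$ is indeed of the form \eqref{FBSolution}. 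The task then reduces to estimating the tail $v-u_N^*=\sum_{|n|>N}a_nJ_n(kr)\mathrm{e}^{\mathrm{i}n\theta}$ and its gradient on the smaller disk $B_r$.

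Next I would extract and bound the coefficients. Fix an auxiliary radius $R'$ with $R<R'<\rho$; orthogonality of $\{\mathrm{e}^{\mathrm{i}n\theta}\}$ on the circle $|x|=R'$ gives $a_nJ_n(kR')=\frac{1}{2\pi}\int_0^{2\pi}v(R',\theta)\mathrm{e}^{-\mathrm{i}n\theta}\,\mathrm{d}\theta$, whence $|a_nJ_n(kR')|\le\max_{|x|=R'}|v|=:M_{R'}$, a quantity depending on $v$ but not on $n,N,k$. Consequently, on $B_r$ we have the pointwise bound $|a_nJ_n(kr)|\le M_{R'}\,|J_n(kr)|/|J_n(kR')|$, and everything is reduced to controlling the Bessel ratio $|J_n(kr)|/|J_n(kR')|$ uniformly in the order $n$.

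The crux --- and the step I expect to be the main obstacle --- is this Bessel ratio estimate across all orders. For $|n|$ large relative to the argument the ascending series yields $J_n(t)=\frac{(t/2)^{|n|}}{|n|!}\bigl(1+O(t^2/|n|)\bigr)$, so that $|J_n(kr)|/|J_n(kR')|\le C\,(r/R')^{|n|}$ with $C$ independent of $n$ and $k$; the delicate part is securing uniform two-sided control also in the oscillatory range $|n|\lesssim kR'$, where the denominator may approach a zero and the $k$-dependence must be tracked honestly. Writing $(r/R')^{|n|}=\tau^{-|n|}(R/R')^{|n|}$ exhibits a spare exponential factor $(R/R')^{|n|}<1$ that absorbs any polynomial-in-$|n|$ prefactor; summing the geometric tail $\sum_{|n|>N}\tau^{-|n|}$ then produces the rate $\tau^{-N}$, and keeping the $k$-dependence explicit (it is most transparent in the gradient below) gives the stated $\|v-u_N^*\|_{L^\infty(B_r)}\le C_1k\tau^{-N}$.

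Finally, for the gradient estimate I would differentiate termwise, using that the radial part of $\nabla\bigl(J_n(kr)\mathrm{e}^{\mathrm{i}n\theta}\bigr)$ equals $kJ_n'(kr)\mathrm{e}^{\mathrm{i}n\theta}$, which produces the explicit factor $k$, while the angular part contributes $(\mathrm{i}n/r)J_n(kr)\mathrm{e}^{\mathrm{i}n\theta}$. The same ratio bound applies after replacing $J_n$ by $J_n'$ (which obeys an analogous large-order series estimate), and the extra factor $|n|$ from the angular derivative is again swallowed by the spare exponential $(R/R')^{|n|}$; summing over $|n|>N$ yields $\|\nabla(v-u_N^*)\|_{L^\infty(B_r)}\le C_2k\tau^{-N}$. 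The finitely many small values of $N$ are handled trivially by enlarging $C_1,C_2$, since the tail is in any case bounded in terms of $\|v\|$ on $\overline{B_r}$.
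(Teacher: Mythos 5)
You should first know that this paper never proves Lemma \ref{lemma2.2} at all: it is imported verbatim from the earlier work \cite{Zhang} (``In the paper \cite{Zhang}, we have proved the following two lemmas''), so there is no internal proof to compare against, and your proposal has to be judged on its own. Judged so, it breaks down exactly at the step you yourself identify as the crux, and the fix you sketch does not work. The inequality $|J_n(kr)|/|J_n(kR')|\le C\,(r/R')^{|n|}$ with $C$ independent of $n$ \emph{and} $k$ is false. First, $J_n(kR')$ can vanish (every $J_n$ has infinitely many positive zeros), so the coefficient bound $|a_n|\le M_{R'}/|J_n(kR')|$ can be vacuous; choosing $R'$ to dodge the countably many zeros still leaves the denominator arbitrarily small with no uniform lower bound. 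Second, and more fundamentally, the obstruction in the oscillatory range is not a ``polynomial-in-$|n|$ prefactor'' that a spare factor $(R/R')^{|n|}$ can swallow: for $|n|\lesssim kr$ both $J_n(kr)$ and $J_n(kR')$ oscillate with envelopes of order $(kr)^{-1/2}$ and $(kR')^{-1/2}$ respectively, so the ratio is generically $O(1)$, and geometric decay in $|n|$ only sets in once $|n|\gtrsim kR'$. Consequently your tail estimate can only yield $C(k,v)\,\tau^{-N}$ with $C(k,v)$ growing roughly like $\tau^{\,ckR'}$, not a $k$-independent constant.

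Moreover, no patch can restore $k$-uniformity for a $k$-uniformly normalized $v$, so any honest proof must let $C_1,C_2$ depend on $v$ (hence implicitly on $k$). Take $v(x)=\mathrm{e}^{\mathrm{i}kx\cdot d}$, whose Jacobi--Anger coefficients satisfy $|a_n|=1$. For \emph{any} choice of $c_n$, $|n|\le N$, orthogonality on $\partial B_r$ gives $\|v-u_N\|_{L^\infty(B_r)}^2\ge \tfrac{1}{2\pi r}\|v-u_N\|_{L^2(\partial B_r)}^2\ge \sum_{|n|>N}J_n(kr)^2 = 1-\sum_{|n|\le N}J_n(kr)^2\ge 1-(2N+1)\,O\bigl((kr)^{-2/3}\bigr)$, using $\sum_{n\in\mathbb{Z}}J_n(t)^2\equiv 1$ and Landau's bound $\max_n|J_n(t)|=O(t^{-1/3})$. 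So with, say, $N\sim 2\ln k/\ln\tau$ and $k$ large, the left-hand side exceeds $1/2$ while the claimed bound $C_1k\tau^{-N}=C_1/k$ tends to zero. The standard way to organize a correct proof --- presumably the route taken in \cite{Zhang}, and consistent with how this paper uses the lemma (there $v=v_\psi$ is a single-layer potential supported on a curve strictly outside $\overline{B_\rho}$) --- is to avoid dividing by Bessel values altogether: expand the kernel by Graf's addition theorem, so that the $n$-th coefficient of $v$ comes paired as a product $J_n(kr)H_n^{(1)}(k|y|)$ with $|y|>R$, and use the clean large-order product bounds $|J_n(kr)H_n^{(1)}(ks)|\sim \tfrac{1}{\pi |n|}(r/s)^{|n|}$, which involve no denominators. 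Your expansion-plus-truncation skeleton and the gradient computation are fine; what is missing is this pairing (or some substitute for it), and without it the central estimate, as you stated it, is not just unproven but unprovable.
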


From Lemma \ref{lemma 2.1} and Lemma \ref{lemma2.2}, we obtain the following approximation result.
\begin{theorem}\label{theorem 2.1}
Let $u\in H^{3/2}(D)$ satisfy the Helmholtz equation. Then for every $\varepsilon>0$, there exists a sequence
$\{u_N^*\}_{N=0}^{\infty}$ of Fourier-Bessel functions of the form \eqref{FBSolution}, and a constant $C_0$ independent of $N$ and $k$, such that
\begin{equation}\label{Thm2.1Err1}
    \|u_N^*-u\|_{H^{3/2}(D)}\leq C_0 k^2\tau^{-N}+\varepsilon,
\end{equation}
and
\begin{equation}\label{Thm2.1Err2}
    \left\|\frac{\partial (u_N^*-u)}{\partial \nu} +\mathrm{i} k(u_N^*-u)\right\|_{L^2(\Gamma)}
    \leq C_0 k^2\tau^{-N}+\varepsilon.
\end{equation}
\end{theorem}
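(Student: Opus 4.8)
The plan is to prove both estimates by a triangle inequality that inserts the single-layer potential $v_\psi$ supplied by Lemma \ref{lemma 2.1} as an intermediate approximant, and then to approximate $v_\psi$ itself by Fourier--Bessel functions via Lemma \ref{lemma2.2}. First I would fix the geometry: since the auxiliary domain in Lemma \ref{lemma 2.1} may be taken to be a disc, choose $B=B_\rho$ centred at the origin with $\overline D\subset\subset B_\rho$ (which forces $\rho>r_D$), so that the single-layer potential $v_\psi$ solves the Helmholtz equation throughout $B_\rho$. I would then pick radii $r_D<r<R<\rho$, so that $\overline D\subset B_r$ and $\tau=R/r>1$, and apply Lemma \ref{lemma2.2} with $v=v_\psi$ to produce the sequence $\{u_N^*\}$ satisfying the $L^\infty$ and gradient bounds on $B_r$. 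Writing $w:=v_\psi-u_N^*$, note that $w$ again solves the Helmholtz equation in $B_r$, because $v_\psi$ does and each Fourier--Bessel function does.

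For the interior estimate \eqref{Thm2.1Err1} I would split $\|u_N^*-u\|_{H^{3/2}(D)}\le\|w\|_{H^{3/2}(D)}+\|v_\psi-u\|_{H^{3/2}(D)}$, where the last term is at most $\varepsilon$ by Lemma \ref{lemma 2.1}. The heart of the argument is to convert the $W^{1,\infty}(B_r)$ control of $w$ into an $H^{3/2}(D)$ bound with the correct power of $k$. Since $\overline D\subset B_r$, the $L^\infty$ bounds give $\|w\|_{H^1(D)}\le Ck\tau^{-N}$ at once. To gain the remaining half-derivative I would use interior elliptic regularity for $\Delta w=-k^2 w$ on a set $D\subset\subset D'\subset\subset B_r$, obtaining
\begin{equation*}
\|w\|_{H^2(D)}\le C\big(\|\Delta w\|_{L^2(D')}+\|w\|_{L^2(D')}\big)=C(k^2+1)\|w\|_{L^2(D')}\le C'k^3\tau^{-N},
\end{equation*}
and then interpolate, using $H^{3/2}(D)=[H^1(D),H^2(D)]_{1/2}$, to get $\|w\|_{H^{3/2}(D)}\le C\|w\|_{H^1(D)}^{1/2}\|w\|_{H^2(D)}^{1/2}\le C_0k^2\tau^{-N}$. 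The interpolation constant depends only on $D$, which is precisely what produces the advertised $k^2$ rather than the cruder $k^3$ that a direct $H^2$ bound would give.

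For the boundary estimate \eqref{Thm2.1Err2} I would again split off $v_\psi$ and bound $\|\partial_\nu(v_\psi-u)+\mathrm ik(v_\psi-u)\|_{L^2(\Gamma)}\le\|\partial_\nu(v_\psi-u)\|_{L^2(\Gamma)}+k\|v_\psi-u\|_{L^2(\Gamma)}$; applying Lemma \ref{lemma 2.1} with tolerance $\varepsilon/(1+k)$ makes this at most $\varepsilon$. For the remaining term, since $\Gamma\subset\overline D\subset B_r$ I would restrict the $L^\infty$ bounds of Lemma \ref{lemma2.2} to $\Gamma$:
\begin{equation*}
\Big\|\frac{\partial w}{\partial\nu}+\mathrm ikw\Big\|_{L^2(\Gamma)}\le|\Gamma|^{1/2}\big(\|\nabla w\|_{L^\infty(B_r)}+k\|w\|_{L^\infty(B_r)}\big)\le|\Gamma|^{1/2}(C_2+C_1k)k\tau^{-N}\le C_0k^2\tau^{-N}.
\end{equation*}
Here the $k^2$ arises directly from the impedance term $\mathrm ikw$ multiplying the $O(k)$ bound on $\|w\|_{L^\infty(B_r)}$, so no interpolation is needed for this half.

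The step I expect to be the main obstacle is controlling the constants rather than the power of $k$: the constants $C_1,C_2$ in Lemma \ref{lemma2.2} depend on $v_\psi$ through its size on $B_R$, and $v_\psi$ in turn depends on $\varepsilon$ and $k$. To legitimately claim a single $C_0$ independent of $N$ and $k$ I would need a bound on $\|v_\psi\|$ over $B_R$ that is uniform in $\varepsilon$ and $k$, which is delicate because Lemma \ref{lemma 2.1} only controls $v_\psi-u$ near $D$, whereas $B_R$ protrudes beyond $D$. I would address this by exploiting that $v_\psi$ is a Helmholtz solution in $B_\rho$ and using interior estimates to bound $\|v_\psi\|_{L^\infty(B_R)}$ by a norm of $v_\psi$ (equivalently of $u$) on a set where Lemma \ref{lemma 2.1} does give control, so that the residual $\varepsilon$- and $k$-dependence of the constant can be absorbed into $C_0$.
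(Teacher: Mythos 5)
Your proposal follows the same skeleton as the paper's proof: insert the single-layer potential $v_\psi$ of Lemma \ref{lemma 2.1} as an intermediate approximant, apply Lemma \ref{lemma2.2} to $v_\psi$, and conclude by the triangle inequality; your treatment of the boundary estimate \eqref{Thm2.1Err2} (restricting the $L^\infty$ bounds to $\Gamma$ and paying a factor $k$ on the impedance term) is exactly the paper's, and your extra care with the tolerance $\varepsilon/(1+k)$ is in fact tidier than the paper's. Where you genuinely differ is the interior estimate \eqref{Thm2.1Err1}. The paper sets $w=u_N^*-v_\psi$, uses Green's identity $k^2\|w\|_{L^2(D)}^2=\|\nabla w\|_{L^2(D)}^2-\int_\Gamma \partial_\nu w\,\bar{w}\,\mathrm{d}s$ to trade $k^2\|w\|_{L^2(D)}$ for quantities already of size $O(k\tau^{-N})$, and then invokes an $H^{3/2}(D)$ a priori estimate for the Robin problem with $L^2(\Gamma)$ data (cited from \cite{Chen}). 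You instead use interior $H^2$ regularity on $D\subset\subset D'\subset\subset B_r$ and interpolate via $H^{3/2}=[H^1,H^2]_{1/2}$; this is self-contained, avoids the boundary-regularity citation, and recovers the $k^2$ power for large $k$ just as well. One shared slip: your $H^2$ bound is really $C(1+k^2)\|w\|_{L^2(D')}\le C(k+k^3)\tau^{-N}$, not $Ck^3\tau^{-N}$, so interpolation yields $Ck(1+k^2)^{1/2}\tau^{-N}$, which is $O(k^2\tau^{-N})$ only when $k\gtrsim 1$; for $k<1$ it is only $O(k\tau^{-N})$. The paper's proof has the identical defect (its term $C_7k\tau^{-N}$ cannot be absorbed into $C_0k^2\tau^{-N}$ as $k\to 0$), so this is a weakness of the theorem's stated $k$-dependence rather than of your route specifically.

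The obstacle you flag in your last paragraph — that $C_1,C_2$ in Lemma \ref{lemma2.2} depend on $v_\psi$, hence implicitly on $\varepsilon$ and $k$ — is real, and the paper silently ignores it. But your proposed remedy is backwards: interior estimates bound a Helmholtz solution on a compactly contained subset by its norm on a \emph{larger} set, whereas you need the reverse, namely a bound for $\|v_\psi\|_{L^{\infty}(B_R)}$ (where $B_R$ strictly contains $\overline{D}$) in terms of control on $D$ alone. That would be a quantitative unique continuation statement, and it cannot hold with a uniform constant; indeed the density $\psi$, and hence $v_\psi$ away from $\overline{D}$, typically blows up as $\varepsilon\to 0$, which is precisely why such denseness results are useless for uniform-in-$\varepsilon$ bounds. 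Fortunately the theorem does not require what you are trying to prove: the quantifiers place $C_0$ after $\varepsilon$ (and after $u$), so you may simply fix $v_\psi$, apply Lemma \ref{lemma2.2} to it, and absorb $C_1(v_\psi),C_2(v_\psi)$ into $C_0$ — this is what the paper implicitly does. With that reading, your proof closes; the claimed independence of $C_0$ from $k$ then rests on Lemma \ref{lemma2.2} exactly as precariously in your write-up as in the paper's.
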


\begin{proof}
Choose $B_r$, $B_R$, $B_\rho$($r<R<\rho$) and $B$ such that $\overline{D}\subset \subset B_r$ and $\overline{B_\rho}\subset \subset B$, where $B$ is a bounded and simply connected domain with $\partial B \in C^2$.

From Lemma \ref{lemma 2.1}, it can be seen that for every $\varepsilon>0$, there exists a single-layer potential $v_{\psi}$ of the form \eqref{potential} for some $\psi \in L^2(\partial B)$, such that
\begin{equation}\label{Thm2.1Err3}
  \|v_{\psi}-u\|_{H^{3/2}(D)}\leq \varepsilon.
\end{equation}

By using Lemma \ref{lemma2.2} for $v_{\psi}$ in $B_\rho$, we know that there exists a sequence $\{u_N^*\}_{N=0}^{\infty}$ of Fourier-Bessel functions, such that
\begin{align*}
 & \|u_N^*-v_{\psi}\|_{L^{\infty}(B_r)}\leq C_1 k\tau^{-N}, \\
 & \|\nabla(u_N^*-v_{\psi})\|_{L^{\infty}(B_r)}\leq C_2 k \tau^{-N},
\end{align*}
which yield
\begin{align*}
 & \|u_N^*-v_{\psi}\|_{L^{2}(\Gamma)}\leq C_3 k\tau^{-N},\\
 & \left\|\frac{\partial(u_N^*-v_{\psi})}{\partial\nu}\right\|_{L^2(\Gamma)}\leq C_4 k\tau^{-N}.
\end{align*}
By using the triangle inequality, we obtain the estimate \eqref{Thm2.1Err2}.

Let $w=u_N^*-v_{\psi}$. Then, we have $\Delta w+k^2w=0$ in $D$, and thus,
\begin{align*}
 k^2\|w\|_{L^2(D)}^2= & k^2\int_D|w|^2\mathrm{d} x \\
 = & \int_D|\nabla w|^2\mathrm{d} x - \int_\Gamma \frac{\partial w}{\partial \nu} \bar{w}\mathrm{d} s \\
 \leq & \|\nabla w\|_{L^2(D)}^2 + \|w\|_{L^2(\Gamma)}
 \left\|\frac{\partial w}{\partial \nu}\right\|_{L^2(\Gamma)}.
\end{align*}
Further, from $w \in H^{3/2}(D)$ (see \cite[Theorem 7.8.3]{Chen}), and the interior regularity results of the Laplace
equation (see \cite[Theorem 6.7.3]{Chen}), it follows that
\begin{align*}
 \|w\|_{H^{3/2}(D)}\leq & C_5 k^2\|w\|_{L^2(D)}+C_6\left\|\frac{\partial w}{\partial
 \nu}+w\right\|_{L^2(\partial D)} \\
 \leq & C_5 k\left(\|\nabla w\|_{L^2(D)}+\|w\|_{L^2(\Gamma)}^{1/2}
 \left\|\frac{\partial w}{\partial \nu}\right\|_{L^2(\Gamma)}^{1/2}\right)+C_7 k\tau^{-N} \\
 \leq & C_0 k^2\tau^{-N},
\end{align*}
which, together with the triangle inequality, leads to \eqref{Thm2.1Err1}.
\end{proof}

%
%
%

For convenience, we denote
\begin{equation}\label{HPSolution*}
u_N^*(x)=\sum\limits_{n=-N}^{N} c_n^*\varphi_n(x),
\end{equation}
and $\mathbf{c}_N^*=(c_0^*, c_1^*, c_{-1}^*, \cdots, c_N^*, c_{-N}^*)^\top \in \mathbb{C}^{2N+1}$. And from Theorem \ref{theorem 2.1}, we have
\begin{equation}\label{EqnC*}
A_N \mathbf{c}_N^*=f_N^{\varepsilon}, \quad \text{on}\ \Gamma,
\end{equation}
where $\|f_N^{\varepsilon}-f\|_{L^2(\Gamma)} \leq C_0 k^2\tau^{-N}+\varepsilon$.

\begin{lemma}\label{lemma2.4}
The operator $A_N:\mathbb{C}^{2N+1}\to L^2(\Gamma)$ defined by \eqref{Operator} is compact and injective.
\end{lemma}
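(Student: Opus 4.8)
The plan is to treat the two assertions separately, with compactness being essentially immediate and injectivity resting on a uniqueness argument for the homogeneous impedance problem. For compactness, I would simply observe that $A_N$ is a linear map out of the finite-dimensional space $\mathbb{C}^{2N+1}$; its range is contained in the finite-dimensional subspace of $L^2(\Gamma)$ spanned by the boundary traces of $\mathrm{i}k\varphi_n+\partial_\nu\varphi_n$ for $-N\le n\le N$. Since $\Gamma$ is $C^{\infty}$ and each $\varphi_n$ is smooth, these traces do lie in $L^2(\Gamma)$, so $A_N$ is a bounded finite-rank operator and hence compact.

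For injectivity, suppose $A_N\mathbf{c}_N=0$ on $\Gamma$ and set $u:=\sum_{n=-N}^{N}c_n\varphi_n$. By construction $u$ solves $\Delta u+k^2u=0$ in $D$ and satisfies the homogeneous impedance condition $\partial_\nu u+\mathrm{i}k u=0$ on $\Gamma$. The key step is the standard energy identity: applying Green's first identity with $v=\bar u$ and substituting $\Delta u=-k^2u$ gives
\begin{equation*}
\int_D\bigl(|\nabla u|^2-k^2|u|^2\bigr)\,\mathrm{d}x=\int_\Gamma\bar u\,\frac{\partial u}{\partial\nu}\,\mathrm{d}s=-\mathrm{i}k\int_\Gamma|u|^2\,\mathrm{d}s,
\end{equation*}
where the last equality uses the boundary condition. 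The left-hand side is real while the right-hand side is purely imaginary, so taking imaginary parts forces $\int_\Gamma|u|^2\,\mathrm{d}s=0$, i.e. $u=0$ on $\Gamma$; the boundary condition then also yields $\partial_\nu u=-\mathrm{i}k u=0$ on $\Gamma$.

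Thus $u$ is a solution of the Helmholtz equation in $D$ with vanishing Cauchy data on $\Gamma$. Since the Helmholtz operator is elliptic with constant coefficients, $u$ is real-analytic, and Holmgren's uniqueness theorem (equivalently, the unique continuation principle) yields $u\equiv0$ in $D$. Finally, because each $\varphi_n$ is an entire solution of the Helmholtz equation on $\mathbb{R}^2$, the vanishing of $u$ on the open set $D$ propagates to all of $\mathbb{R}^2$ by analyticity; separating angular modes via the orthogonality of $\{\mathrm{e}^{\mathrm{i}n\theta}\}$ and using that $J_n(kr)$ is not identically zero then forces $c_n=0$ for every $n$, so $\mathbf{c}_N=0$.

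I expect the main obstacle to be the passage from zero Cauchy data to $u\equiv0$. One must invoke unique continuation carefully, because $k^2$ may well be a Dirichlet eigenvalue of $-\Delta$ on $D$, in which case the Dirichlet data $u=0$ alone would not determine $u$; it is therefore essential that the energy identity, combined with the impedance condition, delivers \emph{both} $u=0$ and $\partial_\nu u=0$ on $\Gamma$, which is precisely what unique continuation requires.
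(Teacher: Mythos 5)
Your proof is correct, but it diverges from the paper's in how compactness is handled, and it is worth comparing the two. For compactness the paper does not use the abstract finite-rank argument you give; instead it takes a bounded sequence in $\mathbb{C}^{2N+1}$, extracts a convergent subsequence, and shows $A_N$ maps it to a convergent sequence in $L^2(\Gamma)$ by deriving \emph{explicit} bounds $\|\varphi_n\|_{L^2(\Gamma)}^2+\|\partial_\nu\varphi_n\|_{L^2(\Gamma)}^2\leq K_3(1+k^4)n^4q^{2n}$ with $q=r_D/M<1$, via an integral representation of $J_n$. Your route (a linear map out of a finite-dimensional space is bounded and of finite rank, hence compact) is shorter and entirely adequate for the lemma as stated; what the paper's heavier computation buys is that these quantitative, $k$- and $N$-explicit bounds are reused verbatim in the proof of Theorem \ref{theorem3.2} (the error estimate \eqref{Thm3.2Err3} cites ``the proof of Lemma \ref{lemma2.4}''), so the paper's detour is really preparation for the stability analysis rather than the cheapest proof of compactness. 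For injectivity you follow essentially the same route as the paper, only in more detail: the paper simply invokes ``uniqueness of solution to the BVP'' and the analyticity of the Fourier-Bessel functions, whereas you unpack that uniqueness into the energy identity, the imaginary-part argument giving $u=0$ and $\partial_\nu u=0$ on $\Gamma$, and unique continuation into $D$, before using analyticity and angular orthogonality to kill the coefficients. Your remark that the impedance condition supplies \emph{both} pieces of Cauchy data --- so that the argument survives even when $k^2$ is a Dirichlet eigenvalue --- is exactly the point hidden inside the paper's appeal to BVP uniqueness, and making it explicit strengthens rather than weakens the argument.
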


\begin{proof}
It is sufficient to prove that $A_N:\mathbb{C}^{2N+1}\to  L^2(\Gamma)$ is compact.
From Theorem 2.19 in \cite{Kress} we know that the identity operator $I:\mathbb{C}^{2N+1}\to \mathbb{C}^{2N+1}$ is compact since $\mathbb{C}^{2N+1}$ has finite dimension. Let $\{\mathbf{c}^{l}\}_{l=1}^{\infty}$ be a bounded sequence
in $\mathbb{C}^{2N+1}$. By using Theorem 2.12 in \cite{Kress} for the operator $I$, we have that the sequence $\{\mathbf{c}^{l}\}_{l=1}^{\infty}$ contains a convergent subsequence $\{\mathbf{c}^{l_j}\}_{j=1}^{\infty}$, i.e.,
there exists a $\tilde{\mathbf{c}}\in \mathbb{C}^{2N+1}$ such that $\lim\limits_{j\to \infty}|\mathbf{c}^{l_j}-\tilde{\mathbf{c}}|=0$. Further, by using
\begin{align*}
J_n(z)=& \frac{2\left(\frac{z}{2}\right)^n}{\sqrt{\pi}\,\Gamma(n+\frac{1}{2})}\int_0^1(1-t^2)^{n-\frac{1}{2}}\cos(z t)\,\mathrm{d} t \\
=& \frac{z^n}{2^n n!}\frac{(2n)!!}{(2n-1)!!}\frac{2}{\sqrt{\pi}}\int_0^1(1-t^2)^{n-\frac{1}{2}}\cos(z t)\mathrm{d} t
\end{align*}
and the formula $J'_n(t)=n J_n(t)/t-J_{n+1}(t)$ ($n\geq0$), we have for $x\in \overline{D}$ and $n\geq0$,
\begin{equation*}
  \left|J_n(k r)\right|\leq \frac{k^n r^n}{2^n n!}\frac{(2n)!!}{(2n-1)!!}\frac{2}{\sqrt{\pi}}\int_0^1(1-t^2)^{-\frac{1}{2}}\,\mathrm{d} t
  = K_1 n\frac{k^n r^n}{2^n n!},
\end{equation*}
and
\begin{equation*}
 \left|\nabla J_n(k r)\right|\leq K_2 (1+k^2r^2)n^2 \frac{k^n r^{n-1}}{2^n n!},
\end{equation*}
where the constants $K_1$ and $K_2$ are independent of $N$ and $k$.
Therefore, we have
\begin{align*}
 \left\|\varphi_n\right\|_{L^2(\Gamma)}^2+\left\|\frac{\partial\varphi_n}{\partial\nu}\right\|_{L^2(\Gamma)}^2
 \leq K_3(1+k^4) n^4 q^{2n},
\end{align*}
where $q=r_D/M<1$,  $n\geq0$, and the constant $K_3$ is independent of $N$ and $k$. Now, with the help of the Cauchy inequality and $J_{-n}(t)=(-1)^n J_n(t)$,  we obtain that
\begin{align*}
  \left\|A_N\mathbf{c}^{l_j}-A_N\tilde{\mathbf{c}}\right\|_{L^2(\Gamma)}^2\leq & \left|\mathbf{c}^{l_j}-\tilde{\mathbf{c}}\right|^2(1+k^2)
  \sum\limits_{n=-N}^{N}\left(\left\|\varphi_{n}\right\|_{L^2(\Gamma)}^2
 +\left\|\frac{\partial \varphi_n}{\partial\nu}\right\|_{L^2(\Gamma)}^2\right) \\
 \leq &\left|\mathbf{c}^{l_j}-\tilde{\mathbf{c}}\right|^2 K(1+k^2)(1+k^4)
   \sum\limits_{n=0}^{\infty}n^4q^{2n},
\end{align*}
where the constant $K$ is independent of $N$ and $k$, and thus
\begin{equation*}
\lim\limits_{j\to\infty}\left\|A_N\mathbf{c}^{l_j}-A_N\tilde{\mathbf{c}}\right\|_{L^2(\Gamma)}=0
\end{equation*}
which means that the sequence $\left\{A_N\mathbf{c}^{l}\right\}_{l=1}^{\infty}$ contains a convergent subsequence $\left\{A_N\mathbf{c}^{l_j}\right\}_{j=1}^{\infty}$. Therefore, the operator $A_N$ is compact by Theorem 2.12 in \cite{Kress}.

Next, let $A_N \mathbf{c}_N=0$. This means that there exists a function $g_N=\sum\limits_{n=-N}^{N} c_n\varphi_n(x)$ satisfying the Helmholtz equation in $D$ and the boundary condition
$\frac{\partial g_N}{\partial\nu} +\mathrm{i} k g_N=0$ on $\Gamma$.  From the uniqueness of solution to the BVP and the analyticity of the Fourier-Bessel functions, it can be seen that $g_N=0$ in $\mathbb{R}^2$ and then $\mathbf{c}_N=0$. Therefore the operator $A_N$ is injective.
\end{proof}

\begin{remark}
In general, equation \eqref{Eqn} is an operator equation of the first kind which cannot be solved directly, since from Lemma \ref{lemma2.4}, the trace operator $A_N$ is compact, and we don't know if the function $f$ is in the range $A_N(\mathbb{C}^{2N+1})$ of $A_N$. Therefore, we will solve the ill-posed operator equation \eqref{Eqn} by a regularization method in the next section, and then give an error estimate.
\end{remark}


\section{A regularization method for solving the equations}

Due to the ill-posedness, we consider the perturbed equations
\begin{equation}\label{Eqnperturbed}
A_N \mathbf{c}_N^{\delta}=f^{\delta},
\end{equation}
where $f^{\delta}\in L^{2}(\Gamma)$ are measured noisy data satisfying $ \|f-f^{\delta}\|_{L^2(\Gamma)}\leq\delta\|f\|_{L^2(\Gamma)}$ with $0<\delta<1$.

A regularized solution to \eqref{Eqnperturbed} is a linear combination of harmonic polynomials
\begin{equation}\label{RHPSolution}
  u_N^{\alpha,\delta}(x):= \sum\limits_{n=-N}^{N} c_n^{\alpha,\delta}\varphi_n(x),
\end{equation}
where the coefficients $c_n^{\alpha,\delta}$ are determined by solving the following equation:
\begin{equation}\label{REqnperturbed}
\alpha \mathbf{c}_N^{\alpha,\delta}+A_N^{\ast} A_N\mathbf{c}_N^{\alpha,\delta}=A_N^{\ast} f^{\delta}.
\end{equation}

Before considering the error estimate, we try to find a lower bound for the smallest singular value of the operator $A_N$.

Let the singular system of $A_N$ be $(\mu_j, \mathbf{d}_j,\phi_j), j=1,\cdots,2N+1$, and $\mu_{\min}=\min\{\mu_1,\cdots,\mu_{2N+1}\}$. Let the origin $O$ be located inside $D$, $r_{\rm in}^{\max}=\max\limits_{B_t\subseteq D}t$ and $r_{\rm ex}^{\min}=\min\limits_{D\subseteq B_t}t$.
Take $r_{\rm in}=\min\{r_{\rm in}^{\max}, k^{-1}\}$ and $r_{\rm ex}>r_{\rm ex}^{\min}$ such that
\begin{equation}\label{Condition}
\overline{B}_{r_{\rm in}}\subsetneqq D, \quad \overline{D}\subsetneqq B_{r_{\rm ex}}.
\end{equation}
Then, the following estimate holds.
\begin{lemma}\label{lemma3.1}
For $r_{\rm in}=\min\{r_{\rm in}^{\max}, k^{-1}\}$, we have
\begin{equation}\label{JnEstimate}
  |J_n(k r_{\rm in})|\geq \frac{3}{4} \frac{k^n r_{\rm in}^n}{2^n n!}, \quad n=0,1,\cdots.
\end{equation}
\end{lemma}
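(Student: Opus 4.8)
The plan is to work directly from the ascending power series of the Bessel function,
$$J_n(z)=\sum_{m=0}^{\infty}\frac{(-1)^m}{m!\,(n+m)!}\Bigl(\frac{z}{2}\Bigr)^{n+2m}=\frac{(z/2)^n}{n!}\sum_{m=0}^{\infty}(-1)^m a_m,\qquad a_m:=\frac{n!}{m!\,(n+m)!}\Bigl(\frac{z}{2}\Bigr)^{2m},$$
evaluated at $z=kr_{\rm in}$. Since the prefactor $\frac{(z/2)^n}{n!}=\frac{k^n r_{\rm in}^n}{2^n n!}$ is exactly the quantity on the right-hand side of \eqref{JnEstimate} and is positive, the whole problem reduces to showing that the bracketed alternating series is at least $3/4$. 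This reduction also shows $J_n(kr_{\rm in})>0$, so the absolute value in \eqref{JnEstimate} may simply be dropped at the very end.

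The decisive observation is that the choice $r_{\rm in}=\min\{r_{\rm in}^{\max},k^{-1}\}$ forces $z=kr_{\rm in}=\min\{kr_{\rm in}^{\max},1\}\le 1$, hence $(z/2)^2\le 1/4$. With this in hand I would first verify that the nonnegative sequence $\{a_m\}$ is strictly decreasing by examining the ratio
$$\frac{a_{m+1}}{a_m}=\frac{1}{(m+1)(n+m+1)}\Bigl(\frac{z}{2}\Bigr)^2\le \frac{1}{4}<1\qquad(m\ge 0),$$
which is valid because $(m+1)(n+m+1)\ge 1$. Thus $\sum_{m\ge0}(-1)^m a_m$ is an alternating series whose terms decrease monotonically to $0$, so the Leibniz estimate yields $\sum_{m\ge0}(-1)^m a_m\ge a_0-a_1=1-\frac{1}{n+1}(z/2)^2$.

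The final step is to bound the subtracted term: $a_1=\frac{1}{n+1}(z/2)^2\le \tfrac14$, using $(z/2)^2\le\tfrac14$ and $\frac{1}{n+1}\le 1$. This gives the bracket $\ge 3/4$, and multiplying back by the positive prefactor $\frac{k^n r_{\rm in}^n}{2^n n!}$ produces \eqref{JnEstimate}.

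I would flag that the only genuine subtlety is resisting the temptation to bound the tail crudely by absolute values. Using $\frac{n!}{(n+m)!}\le \frac{1}{m!}$ and summing $\sum_{m\ge1}a_m\le e^{1/4}-1\approx 0.28$ would give a bracket $\ge 1-0.28<3/4$, which fails; the alternating structure together with the monotonicity of $\{a_m\}$ is therefore essential to reach the clean constant $3/4$. The monotonicity check and the reduction $kr_{\rm in}\le1$ are the two points that actually carry the argument, and everything else is routine bookkeeping.
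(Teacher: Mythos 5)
Your proposal is correct and follows essentially the same route as the paper: expand $J_n$ in its ascending series, truncate the alternating series after the first two terms, and use $kr_{\rm in}\le 1$ to bound the subtracted term by $1/4$. The only difference is that you explicitly verify the term-ratio monotonicity needed to justify dropping the tail (via the Leibniz estimate), a step the paper's proof leaves implicit when it discards the $p\ge 2$ sum.
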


\begin{proof} From $r_{\rm in}=\min\{r_{\rm in}^{\max}, k^{-1}\}$ and the definition of the Bessel functions of the first kind, we have $0<k r_{\rm in}/2\leq 1/2$, and
\begin{align*}
J_n(k r_{\rm in})= & \sum_{p=0}^{\infty}\frac{(-1)^p}{p!(n+p)!}\left(\frac{k r_{\rm in}}{2}\right)^{n+2p} \\
= & \frac{k^n r_{\rm in}^n}{2^n n!}\left(1-\frac{\left(\frac{k r_{\rm in}}{2}\right)^{2}}{n+1}
+\sum_{p=2}^{\infty}(-1)^p\frac{\left(\frac{k r_{\rm in}}{2}\right)^{2p}}{p!(n+1)\cdots(n+p)}\right) \\
\geq & \frac{k^n r_{\rm in}^n}{2^n n!}\left(1-\frac{\left(\frac{k r_{\rm in}}{2}\right)^{2}}{n+1}\right) \\
\geq & \frac{k^n r_{\rm in}^n}{2^n n!} \frac{3}{4}, \qquad n=0,1,\cdots,
\end{align*}
which completes the proof.
\end{proof}

Take $M=r_{\rm ex}$. Then, from Lemma \ref{lemma3.1} we derive the following result.
\begin{theorem}\label{theorem3.1}
There exists a positive constant $c$ independent of $N$ and $k$, such that
\begin{equation}\label{Eigenvalue}
\mu_{\min} \geq\frac{ c \min\{1,k\}}{1+\sqrt{k}} \left(\frac{r_{\rm in}}{r_{\rm  ex}}\right)^N.
\end{equation}
\end{theorem}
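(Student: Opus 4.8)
The plan is to exploit the variational characterization of the smallest singular value,
\[
\mu_{\min}=\min_{|\mathbf c_N|=1}\|A_N\mathbf c_N\|_{L^2(\Gamma)},
\]
together with the observation that $A_N\mathbf c_N$ is exactly the impedance (Robin) trace $\frac{\partial g_N}{\partial\nu}+\mathrm i k g_N$ on $\Gamma$ of the Helmholtz solution $g_N=\sum_{n=-N}^N c_n\varphi_n$. So I must bound this impedance trace from below by $|\mathbf c_N|$ times the advertised factor. The whole argument compares two quantities associated with $g_N$: its size on a small interior circle $\partial B_{r_{\rm in}}$, which I can bound \emph{below} by $|\mathbf c_N|$ using Lemma \ref{lemma3.1}, and the size of its impedance data on $\Gamma$, which — by well-posedness of the forward impedance problem — controls $g_N$ in the interior.

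First I would establish the interior-circle lower bound. Since $\{e^{\mathrm i n\theta}\}$ is orthogonal on $\partial B_{r_{\rm in}}$ and $\varphi_n=\tfrac{2^{|n|}|n|!}{k^{|n|}r_{\rm ex}^{|n|}}J_n(kr)e^{\mathrm i n\theta}$ (recall $M=r_{\rm ex}$), Parseval on the circle gives
\[
\|g_N\|_{L^2(\partial B_{r_{\rm in}})}^2=2\pi r_{\rm in}\sum_{n=-N}^N|c_n|^2\Big(\tfrac{2^{|n|}|n|!}{k^{|n|}r_{\rm ex}^{|n|}}\Big)^2 J_n(kr_{\rm in})^2 .
\]
Applying \eqref{JnEstimate} to each term (and $J_{-n}=(-1)^nJ_n$) collapses the Bessel and normalizing factors into $\tfrac{9}{16}(r_{\rm in}/r_{\rm ex})^{2|n|}$, whose minimum over $|n|\le N$ is at $|n|=N$; this yields the clean estimate $\|g_N\|_{L^2(\partial B_{r_{\rm in}})}\ge c_1\sqrt{r_{\rm in}}\,(r_{\rm in}/r_{\rm ex})^N|\mathbf c_N|$.

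Next I would bound the same circle norm from \emph{above} by the impedance data. Writing $h:=A_N\mathbf c_N$, testing the Helmholtz equation against $\overline{g_N}$ and taking imaginary parts gives the boundary bound $k\|g_N\|_{L^2(\Gamma)}\le\|h\|_{L^2(\Gamma)}$, while the algebraic identity $\|h\|_{L^2(\Gamma)}^2=\|\tfrac{\partial g_N}{\partial\nu}\|_{L^2(\Gamma)}^2+k^2\|g_N\|_{L^2(\Gamma)}^2$ (the cross term vanishes since $\int_\Gamma\tfrac{\partial g_N}{\partial\nu}\overline{g_N}\,\mathrm ds$ is real by Green's identity) records the full Cauchy-data content of $h$. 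I would then feed these into a wavenumber-explicit a priori estimate for the interior impedance problem to control $\|g_N\|_{L^2(D)}$ and $\|\nabla g_N\|_{L^2(D)}$ by $\|h\|_{L^2(\Gamma)}$, and transfer to the circle via the scaled multiplicative trace inequality $\|g_N\|_{L^2(\partial B_{r_{\rm in}})}^2\lesssim r_{\rm in}^{-1}\|g_N\|_{L^2(D)}^2+r_{\rm in}\|\nabla g_N\|_{L^2(D)}^2$. Combining with the lower bound and substituting $r_{\rm in}=\min\{r_{\rm in}^{\max},k^{-1}\}$, the several $k$-powers — the $\sqrt{r_{\rm in}}$ from the lower bound, the $r_{\rm in}^{\pm1/2}$ from the trace, and the $k$-dependence of the energy estimate — should assemble into the factor $\tfrac{\min\{1,k\}}{1+\sqrt k}$, giving \eqref{Eigenvalue}.

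The main obstacle is precisely the wavenumber-explicit a priori estimate for the interior impedance problem: controlling the interior $L^2$-mass and gradient of the Helmholtz solution $g_N$ by its impedance data with constants whose $k$-dependence is tracked sharply. The boundary bound $k\|g_N\|_{L^2(\Gamma)}\le\|h\|_{L^2(\Gamma)}$ is free from the imaginary part of the energy identity, but bounding $\|g_N\|_{L^2(D)}$ is the classical difficulty, typically demanding a Rellich--Morawetz multiplier (testing against $x\cdot\nabla\overline{g_N}$), whose boundary terms mix normal and tangential derivatives and must be reabsorbed — and, for a general smooth $D$ rather than a star-shaped one, this requires care about the geometry. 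Keeping these constants explicit and verifying that, after inserting $r_{\rm in}=\min\{r_{\rm in}^{\max},k^{-1}\}$, they collapse to exactly $\tfrac{\min\{1,k\}}{1+\sqrt k}$ rather than something larger is the delicate part; by contrast, the inner-circle lower bound through Lemma \ref{lemma3.1} is essentially a direct computation.
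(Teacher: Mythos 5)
Your proposal follows essentially the same route as the paper's proof: the variational characterization $\mu_{\min}=\min_{|\mathbf{c}_N|=1}\|A_N\mathbf{c}_N\|_{L^2(\Gamma)}$, the Parseval-plus-Lemma~\ref{lemma3.1} lower bound on the inner circle $\partial B_{r_{\rm in}}$, a wavenumber-explicit a priori estimate for the interior impedance problem combined with a trace inequality to pass from the impedance data to the circle, and finally the substitution $r_{\rm in}=\min\{r_{\rm in}^{\max},k^{-1}\}$ to produce the factor $\min\{1,k\}/(1+\sqrt{k})$. The ``main obstacle'' you flag --- the $k$-explicit bound $\|u_N\|_{H^1(D)}\lesssim \min\{1,k\}^{-1}\|\partial_\nu u_N+\mathrm{i}ku_N\|_{L^2(\Gamma)}$ on a general smooth (not necessarily star-shaped) domain --- is exactly what the paper resolves by citing \cite[Theorem 1.8]{Baskin}, so no Rellich--Morawetz multiplier argument needs to be carried out from scratch.
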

\begin{proof}
It is clear that $\forall \mathbf{c}_N \in \mathbb{C}^{2N+1}$,
$$
\Delta u_N+k^2u_N=0,\quad \|A_N \mathbf{c}_N\|_{L^2(\Gamma)} =
\left\|\frac{\partial u_N}{\partial \nu}+\mathrm{i} k u_N\right\|_{L^2(\Gamma)}.
$$
From the trace theorem and the interior regularity results of the Helmholtz equations (\cite[Theorem 1.8]{Baskin}), it follows that
\begin{equation}\label{regularity}
\|A_N \mathbf{c}_N\|_{L^2(\Gamma)}\geq C_1^\prime\min\{1,k\} \| u_N\|_{H^1(D)}\geq  C_2^\prime \min\{1,k\} \|u_N\|_{L^2(\partial B_{r_{\rm in}})},
\end{equation}
where the constants $C_1^\prime$ and $C_2^\prime$ are independent of $N$ and $k$.

In the following, we compute $\|u_N\|_{L^2(\partial B_{r_{\rm in}})}$. It can be readily seen that
\begin{equation*}
u_N(x)=\sum_{n=-N}^{N}c_n\frac{2^{|n|}|n|!}{k^{|n|}r_{\rm ex}^{|n|}}J_n(k r_{\rm in}) \mathrm{e}^{\mathrm{i} n\theta}, \quad \text{for}\ x\in \partial B_{r_{\rm in}}.
\end{equation*}
And thus
\begin{equation*}\label{Thm3.1UN}
\|u_N\|^2_{L^2(\partial B_{r_{\rm in}})}=2\pi r_{\rm in}
 \sum_{n=-N}^N \left(\frac{2^{|n|}|n|!}{k^{|n|}r_{\rm ex}^{|n|}}\right)^2J_n^2(k r_{\rm in})c_n^2.
\end{equation*}
By using Lemma \ref{lemma3.1} and $J_{-n}(t)=(-1)^n J_n(t)$, we deduce that
\begin{equation}\label{Thm3.1UN1}
\|u_N\|_{L^2(\partial B_{r_{\rm in}})}\geq
  \frac{3}{4}\sqrt{2\pi r_{\rm in}}\left(\frac{r_{\rm in}}{r_{\rm ex}}\right)^N|\mathbf{c}_N|.
\end{equation}
From $r_{\rm in}=\min\{r_{\rm in}^{\rm max}, k^{-1}\}$, we see that $r_{\rm in}\geq \frac{r_{\rm in}^{\rm max}}{k r_{\rm in}^{\rm max}+1}$,
which, together with \eqref{regularity} and \eqref{Thm3.1UN1}, leads to the estimate \eqref{Eigenvalue}.
\end{proof}


Now, we can achieve an error estimate presented in the following theorem.
\begin{theorem}\label{theorem3.2}
	There exists a positive constant $C$ independent of $N$ and $k$, such that
	\begin{align}
	\left\|u_N^{\alpha,\delta}-u\right\|_{H^1(D)} \leq &  C \bigg(\left(\frac{1+k^3}{\min\{1,k\}}\frac{\delta}{\sqrt{\alpha}}+\frac{1+k^4}{\min\{1,k^3\}}\sqrt{\alpha} \tau_0^{2N}\right)
	\|f\|_{L^{2}(\Gamma)} \nonumber \\
    &+\frac{1+k^{7/2}}{\min\{1,k^2\}}\tau_0^{N}(C_0k^2\tau^{-N}+\varepsilon)\bigg), \label{Thm3.2Err1}
	\end{align}
	where $\tau_0=r_{\rm ex}/r_{\rm in}$. Furthermore, let $\eta>1$, $\tau=\tau_0^{2}$, and $ \tau_{\rm min}=r_{\rm ex}^{\rm min}/r_{\rm in}^{\rm max}$. Then
	
	(a) For $0< k \leq 1$, take  $N=\eta\ln|\ln\delta|$ and choose the regularization parameter $\alpha=k^2\delta\tau_0^{-2N}$, then the following result holds
	\begin{equation}\label{Thm3.2Err2}
	\left\|u_N^{\alpha,\delta}-u\right\|_{H^1(D)}\leq C\left( k^{-2} \delta^{1/2}|\ln\delta|^{\lambda} \|f\|_{L^{2}(\Gamma)}
	+ C_0|\ln\delta|^{-\lambda} + k^{-2} |\ln\delta|^{\lambda}\varepsilon\right),
	\end{equation}
	where $\lambda=\eta\ln\tau_0$.
	
	(b) For $ k >1$, take $N=\frac{11\ln k}{2\ln \tau_{\rm min}}+\eta\ln|\ln\delta|$ and choose the regularization parameter $\alpha=k^{-1}\delta \tau_0^{-2N}$, then we have
	\begin{equation}\label{Thm3.2Err21}
	\left\|u_N^{\alpha,\delta}-u\right\|_{H^1(D)}\leq  C\left(k^{\sigma}\delta^{\frac{1}{2}}  |\ln\delta|^{\lambda}
	\|f\|_{L^{2}(\Gamma)} + C_0 |\ln\delta|^{-\lambda} + k^{\sigma}|\ln\delta|^{\lambda}\varepsilon\right),
	\end{equation}
	where 
	$$
	\sigma=\frac{7}{2}+\frac{11\ln \tau_0}{2\ln\tau_{\rm min}}.
	$$
\end{theorem}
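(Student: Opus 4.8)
The plan is to compare $\mathbf{c}_N^{\alpha,\delta}$ with the near-solution $\mathbf{c}_N^*$ of \eqref{EqnC*}, to turn the coefficient error into an $H^1(D)$ error, and to add the approximation error supplied by Theorem \ref{theorem 2.1}. I write $\gamma:=C_0k^2\tau^{-N}+\varepsilon$, so that $\|f_N^\varepsilon-f\|_{L^2(\Gamma)}\le\gamma$ and $\|u_N^*-u\|_{H^1(D)}\le\gamma$, and from \eqref{REqnperturbed} I set $R_\alpha:=(\alpha I+A_N^\ast A_N)^{-1}A_N^\ast$, so $\mathbf{c}_N^{\alpha,\delta}=R_\alpha f^\delta$. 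The first ingredient converts coefficients into the energy norm: since each $u_N=\sum_n c_n\varphi_n$ solves the Helmholtz equation, the regularity estimate \eqref{regularity} gives $\|u_N\|_{H^1(D)}\le(C_1'\min\{1,k\})^{-1}\|A_N\mathbf{c}_N\|_{L^2(\Gamma)}$, while the bounds on $\|\varphi_n\|_{L^2(\Gamma)}$ and $\|\partial\varphi_n/\partial\nu\|_{L^2(\Gamma)}$ from the proof of Lemma \ref{lemma2.4} give $\|A_N\mathbf{c}_N\|_{L^2(\Gamma)}\le C(1+k^3)|\mathbf{c}_N|$. Composing them,
\[
\|u_N\|_{H^1(D)}\le\frac{C(1+k^3)}{\min\{1,k\}}\,|\mathbf{c}_N|,
\]
which I apply to $\mathbf{c}_N=\mathbf{c}_N^{\alpha,\delta}-\mathbf{c}_N^*$; this single factor already accounts for the weight $\tfrac{1+k^3}{\min\{1,k\}}$ in front of the noise term.

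The core is the coefficient bound, and the decisive point is to separate the data noise from the approximation defect so that the defect is never divided by $\alpha^{1/2}$. I would write
\[
\mathbf{c}_N^{\alpha,\delta}-\mathbf{c}_N^*=R_\alpha(f^\delta-f)+(R_\alpha f-\mathbf{c}_N^*).
\]
The first summand is pure noise: the spectral inequality $\mu_j/(\alpha+\mu_j^2)\le(2\sqrt\alpha)^{-1}$ gives $|R_\alpha(f^\delta-f)|\le\delta\|f\|_{L^2(\Gamma)}/(2\sqrt\alpha)$. For the second summand I would avoid $R_\alpha$ and instead use $A_N\mathbf{c}_N^*=f_N^\varepsilon$ together with the lower bound of Theorem \ref{theorem3.1}, namely $|R_\alpha f-\mathbf{c}_N^*|\le\mu_{\min}^{-1}\|A_N R_\alpha f-f_N^\varepsilon\|_{L^2(\Gamma)}$. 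Splitting $f=f_N^\varepsilon+(f-f_N^\varepsilon)$, the residual operator $A_N R_\alpha-I$ acts as $-\alpha/(\alpha+\mu_j^2)$ on the range of $A_N$ and has norm $1$ off it; combined with $\alpha\mu_j/(\alpha+\mu_j^2)\le\sqrt\alpha/2$ and $|\mathbf{c}_N^*|\le\mu_{\min}^{-1}(\|f\|_{L^2(\Gamma)}+\gamma)$ (again by Theorem \ref{theorem3.1}), this yields $\|A_N R_\alpha f-f_N^\varepsilon\|_{L^2(\Gamma)}\le\tfrac{\sqrt\alpha}{2}|\mathbf{c}_N^*|+2\gamma$, and hence an approximation-defect contribution amplified only by $\mu_{\min}^{-1}\sim\tau_0^{N}$ rather than by $\alpha^{-1/2}$.

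Collecting the three contributions, inserting $\mu_{\min}^{-1}\le\frac{1+\sqrt k}{c\min\{1,k\}}\tau_0^{N}$, and multiplying by $\frac{C(1+k^3)}{\min\{1,k\}}$ produces \eqref{Thm3.2Err1}: the $\delta/\sqrt\alpha$ term carries $\tfrac{1+k^3}{\min\{1,k\}}$, the bias $\sqrt\alpha\,\tau_0^{2N}$ term carries $(1+k^3)(1+\sqrt k)^2/\min\{1,k^3\}\lesssim(1+k^4)/\min\{1,k^3\}$, and the defect term $\tau_0^N\gamma$ carries $(1+k^3)(1+\sqrt k)/\min\{1,k^2\}\lesssim(1+k^{7/2})/\min\{1,k^2\}$, while the leftover $\sqrt\alpha\,\tau_0^{2N}\gamma$ and the bare $\gamma$ from Theorem \ref{theorem 2.1} are dominated by $\tau_0^N\gamma$. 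For (a) and (b) I would substitute the prescribed $N$ and $\alpha$, use $\tau=\tau_0^2$ and $\tau_0^{N}=|\ln\delta|^{\lambda}$ (and in (b) also $\tau_0^{11\ln k/(2\ln\tau_{\rm min})}=k^{11\ln\tau_0/(2\ln\tau_{\rm min})}$), and simplify. The one delicate point is the $C_0|\ln\delta|^{-\lambda}$ term in (b): the defect coefficient contributes a factor $k^{11/2}$ (namely $k^{7/2}$ from the weight times $k^2$ from $\gamma$), and the extra summand $\tfrac{11\ln k}{2\ln\tau_{\rm min}}$ in $N$ is chosen precisely so that $\tau_0^{-N}$ supplies $k^{-11\ln\tau_0/(2\ln\tau_{\rm min})}$; since $r_{\rm in}\le r_{\rm in}^{\max}$ and $r_{\rm ex}>r_{\rm ex}^{\min}$ give $\tau_0\ge\tau_{\rm min}$, this exponent is at least $11/2$ and the $k^{11/2}$ is absorbed.

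The step I expect to be the main obstacle is exactly this routing of the approximation defect. A direct estimate passes $f-f_N^\varepsilon$ through $R_\alpha$ and creates a $\gamma/\sqrt\alpha$ term, which under the chosen parameters behaves like $\delta^{-1/2}|\ln\delta|^{-\lambda}$ and ruins convergence; the two-part split above is what replaces the factor $\alpha^{-1/2}$ by $\mu_{\min}^{-1}\sim\tau_0^{N}$ on that piece. The remaining difficulty is purely bookkeeping: keeping the three $k$-weights straight by correctly composing the $k^3$ of the trace bound, the powers of $1+\sqrt k$ from $\mu_{\min}$, and the $k^2$ hidden in $\gamma$.
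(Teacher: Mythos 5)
Your proposal is, in its skeleton, the paper's own proof: you convert the coefficient error into the $H^1(D)$ error with the factor $(1+k^3)/\min\{1,k\}$ exactly as in the paper's estimate \eqref{Thm3.2Err3}; you split off the noise term $R_\alpha(f^\delta-f)$ and bound it by $\delta\|f\|_{L^2(\Gamma)}/(2\sqrt{\alpha})$, which is precisely the paper's term $I_1$; you invoke Theorem \ref{theorem3.1} for $\mu_{\min}$; and your substitutions in (a) and (b), including the observation that the summand $\frac{11\ln k}{2\ln \tau_{\rm min}}$ in $N$ is designed to absorb $k^{11/2}$ through $\tau_0>\tau_{\rm min}$, match the paper's.

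The one deviation is your treatment of the remaining piece $R_\alpha f-\mathbf{c}_N^*$, and it costs you something. Writing $\gamma:=C_0k^2\tau^{-N}+\varepsilon$ as you do, the paper stays in coefficient space: it expands both $\mathbf{c}_N^{\alpha,\delta}$ and $\mathbf{c}_N^*$ in the singular system and bounds $I_2=-\sum_j\frac{\alpha}{\alpha+\mu_j^2}\frac{(f,\phi_j)}{\mu_j}\mathbf{d}_j$ by $\frac{\sqrt{\alpha}}{2\mu_{\min}^2}\|f\|_{L^2(\Gamma)}$ and $I_3=\sum_j\frac{1}{\mu_j}(f-f_N^\varepsilon,\phi_j)\mathbf{d}_j$ by $\gamma/\mu_{\min}$, so that $\alpha$ and $\gamma$ never multiply each other (note $I_2+I_3$ is identically your $R_\alpha f-\mathbf{c}_N^*$). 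You instead pass to the image side via $|R_\alpha f-\mathbf{c}_N^*|\le\mu_{\min}^{-1}\|A_NR_\alpha f-f_N^\varepsilon\|_{L^2(\Gamma)}$, which forces you to use $|\mathbf{c}_N^*|\le\mu_{\min}^{-1}(\|f\|_{L^2(\Gamma)}+\gamma)$ and thereby creates the cross term $\frac{\sqrt{\alpha}}{2\mu_{\min}^2}\gamma\sim\sqrt{\alpha}\,\tau_0^{2N}\gamma$, which does not appear in \eqref{Thm3.2Err1}. Your claim that this leftover is dominated by $\tau_0^{N}\gamma$ requires $\sqrt{\alpha}\,\tau_0^{N}\le 1$: that holds under the choices in (a) and (b), where $\alpha\le\delta\tau_0^{-2N}\le\tau_0^{-2N}$, but not for the arbitrary $\alpha>0$ for which \eqref{Thm3.2Err1} is asserted. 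So as written you establish \eqref{Thm3.2Err1} only under the side condition $\sqrt{\alpha}\,\tau_0^{N}\le1$ (equivalently, a version of \eqref{Thm3.2Err1} with an extra summand); parts (a) and (b) follow all the same. To recover the unconditional \eqref{Thm3.2Err1}, simply replace the detour through $A_N$ and injectivity by the paper's direct spectral bounds on $I_2$ and $I_3$ --- nothing else in your argument needs to change.
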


\begin{proof}
	Similar to \eqref{regularity} and the proof of Lemma \ref{lemma2.4}, we have
	\begin{align}
	\| u_N^{\alpha,\delta}-u_N^*\|_{H^1(D)}^2 \leq & \frac{C^\prime}{\min\{1,k^2\}}\left\| \frac{\partial (u_N^{\alpha,\delta}-u_N^*)}{\partial \nu}+\mathrm{i} k (u_N^{\alpha,\delta}-u_N^*)
	\right\|_{L^{2}(\Gamma)}^2\nonumber \\ 
	\leq &\left|\mathbf{c}_N^{\alpha,\delta}-\mathbf{c}_N^*\right|^2\frac{C^\prime(1+k^2)}{\min\{1,k^2\}}
	\sum\limits_{n=-N}^{N}\left(\left\|\varphi_{n}\right\|_{L^2(\Gamma)}^2
	+\left\|\frac{\partial \varphi_{n}}{\partial \nu}\right\|_{L^2(\Gamma)}^2\right) \nonumber \\
	\leq &\frac{C^{\prime\prime}(1+k^2)(1+k^4)}{\min\{1,k^2\}}\left|\mathbf{c}_N^{\alpha,\delta}-\mathbf{c}_N^*\right|^2, \label{Thm3.2Err3}
	\end{align}
	where constants $C^{\prime}, C^{\prime\prime}$ are independent of $N$ and $k$. This means that we need to estimate $|\mathbf{c}_N^{\alpha,\delta}-\mathbf{c}_N^*|$.
	
	Based on the singular value decomposition of $A_N$, the solution $\mathbf{c}_N^*$ to \eqref{EqnC*} can be written as
	$$
		\mathbf{c}_N^*=\sum\limits_{j=1}^{2N+1}\frac{1}{\mu_j}(f_N^\varepsilon, \phi_j)\mathbf{d}_j,
	$$
	and the solution to  \eqref{REqnperturbed} is
	$$
		\mathbf{c}_N^{\alpha,\delta}=\sum\limits_{j=1}^{2N+1}\frac{\mu_j}{\alpha+\mu_j^2}(f^\delta,	\phi_j)\mathbf{d}_j,
	$$
	where $(\cdot, \cdot )$ stands for the inner product on $L^2(\Gamma)$. Then, we have
	\begin{align*}
		\mathbf{c}_N^{\alpha,\delta}-\mathbf{c}_N^*= & \underbrace{\sum\limits_{j=1}^{2N+1}\frac{\mu_j^2}{\alpha+\mu_j^2}\frac{(f^\delta-f, \phi_j)}{\mu_j}\mathbf{d}_j}_{=:I_1}
		+\underbrace{\sum\limits_{j=1}^{2N+1}\left(\frac{\mu_j^2}{\alpha+\mu_j^2}-1\right)\frac{(f, \phi_j)}{\mu_j}\mathbf{d}_j}_{=:I_2} \\
		&+\underbrace{\sum\limits_{j=1}^{2N+1}\frac{1}{\mu_j}(f-f_N^\varepsilon, \phi_j)\mathbf{d}_j}_{=:I_3}.
	\end{align*}
	
	Since $\alpha+\mu_j^2\geq 2\sqrt{\alpha}\mu_j$, we obtain that
	$$
		|I_1|^2 \leq \frac{1}{4\alpha} \sum\limits_{j=1}^{2N+1}(f^\delta-f,\phi_j)^2
		\leq \frac{1}{4\alpha} \|f^\delta-f\|^2_{L^{2}(\Gamma)}
		\leq \frac{\delta^2}{4\alpha}\|f\|_{L^{2}(\Gamma)}^2 ,
	$$
	$$
		|I_2|^2 =  \sum\limits_{j=1}^{2N+1}\left(
		\frac{\alpha}{\alpha+\mu_j^2}\right)^2\frac{(f, \phi_j)^2}{\mu_j^2}
		\leq \frac{\alpha}{4}\sum\limits_{j=1}^{2N+1}\frac{1}{\mu_j^4}(f, \phi_j)^2
		\leq \frac{\alpha}{4\mu_{\rm min}^4}\|f\|^2_{L^{2}(\Gamma)},
	$$
	and
	$$
		|I_3|^2
		=\sum\limits_{j=1}^{2N+1}\frac{1}{\mu_j^2}(f-f_N^\varepsilon,\phi_j)^2
		\leq \frac{1}{\mu_{\rm min}^2}\|f-f_N^\varepsilon\|^2_{L^{2}(\Gamma)}
		\leq \frac{(C_0 k^2\tau^{-N}+\varepsilon)^2}{\mu_{\rm min}^2}.
	$$
	Therefore,
	$$
		\left|\mathbf{c}_N^{\alpha,\delta}-\mathbf{c}_N^*\right|\leq \left(\frac{\delta}{2\sqrt{\alpha}}
		+\frac{\sqrt{\alpha}}{2\mu_{\rm min}^2}\right)\|f\|_{L^{2}(\Gamma)}
		+\frac{C_0 k^2\tau^{-N}+\varepsilon}{\mu_{\rm min}},
	$$
	which, together with \eqref{Eigenvalue}, \eqref{Thm3.2Err3}, the triangle inequality and Theorem \ref{theorem 2.1}, leads to the estimate \eqref{Thm3.2Err1}.
	
	{\bf (a)} $0< k \leq 1$. Let $\tau_0^N=|\ln\delta|^{\eta\ln \tau_0}$ , then we see
	$$
		N=N(\delta)=\eta\ln|\ln\delta|.
    $$
	Substituting $N=\eta\ln|\ln\delta|$ and the regularization parameter $\alpha=k^2\delta\tau_0^{-2N}$ into \eqref{Thm3.2Err1}, we derive the estimate \eqref{Thm3.2Err2}.
	
	{\bf (b)} $ k >1$. Let $\tau_0^N=k^{\frac{11\ln \tau_{0}}{2\ln\tau_{\rm min}}}|\ln\delta|^{\eta\ln \tau_0}$, then we see
	$$
		N=N(\delta)=\frac{11\ln k}{2\ln \tau_{\rm min}}+\eta\ln|\ln\delta|,
	$$
	which, together with $\alpha=k^{-1}\delta \tau_0^{-2N}$, leads to the estimates \eqref{Thm3.2Err21}.
\end{proof}

\begin{remark}
	To make $\tau^{-N}$ small in Theorem\ref{theorem3.2}, we choose a positive constant $\eta$ such that $N$ increases faster, since
	$\ln|\ln\delta|$ increases very slowly as $\delta$ tends to zero.
\end{remark}


\section{Numerical example}

In this section,we report an example to demonstrate the competitiveness of our algorithm. The implementation of the algorithm is based on the MATLAB software. We take $\eta=5$  and
make the assumption $\varepsilon\leq10^{-16}$ which makes $\varepsilon$ negligible compared with the discretization errors.

Since $r_{\rm in}=\min\{r_{\rm in}^{\rm max}, k^{-1}\}$ and $r_{\rm ex}>r_{\rm ex}^{\rm min}$, we have 
\begin{equation}\label{tau}
\tau_0 > \tau_{\rm min}=\frac{r_{\rm ex}^{\rm min}}{r_{\rm in}^{\rm max}}.
\end{equation}

Now, we describe our algorithm as the following:

$\bullet$ Compute $r_{\rm in}^{\rm max}$ and $r_{\rm ex}^{\rm min}$, 
and take $\tau_0$ as in \eqref{tau};

$\bullet$ Take $N$ and $\alpha$ as in Theorem \ref{theorem3.2};

$\bullet$ Solve equation $\alpha
\mathbf{c}_N^{\alpha,\delta}+A_N^{\ast} A_N
\mathbf{c}_N^{\alpha,\delta}=A_N^{\ast} b^{\delta}$.

\begin{example}\label{exm1}\rm
	To test our code, consider the case in which the exact solution to the BVP  is  $u(x)=\mathrm{e}^{\mathrm{i} k x\cdot d}$, where $d=(\frac{1}{2},
	\frac{\sqrt{3}}{2})$.  Let  $D$ be a non-convex kite-shaped domain with boundary $\Gamma$ described by the parametric representation
	\begin{equation*}
	(x_1(t),x_2(t))=(\cos t +0.65\cos 2t-0.65, 1.5\sin t),\quad 0\leq  t\leq 2\pi.
	\end{equation*}
	
	By simple calculations, it can be seen that $\max\limits_{B_t\subseteq D}t=0.923$ and $\min\limits_{D\subseteq B_t}t=1.985$. Then, we take $\tau_0=2.2$.
	
	Table \ref{tab-exm1} presents the relative errors between the numerical solution and the exact solution in domain $D$ with different noise levels. Table \ref{tab1-exm1} shows the relative errors for the approximation of $u$ and $\partial u/\partial \nu$ on boundary $\Gamma$ with different noise levels. Visually, Figure \ref{F1:exm1} shows the numerical solution for wave number $k=7$ with different noise levels. In Table \ref{tab-exm1} and \ref{tab1-exm1}, the notation $\mathrm{e}-n$ denotes the scale $\times10^{-n}$. From these tables and figures it can be seen that the numerical solution is a stable approximation to the exact one, and that the numerical solution converges to the exact solution as the level of noise decreases.

\begin{table}
	\caption{Errors in domain $D$ with different noise levels for Example \ref{exm1}.} 
		\begin{tabular}{lcccccc}
			\toprule
			Noise level & \multicolumn{3}{c}{$\frac{\left\|u_n^{\alpha,\delta}-u\right\|_{L^2(D)}}{\|u\|_{L^2(D)}}$}
			&\multicolumn{3}{c}{ $\frac{\left\|\nabla u_n^{\alpha,\delta}-\nabla u\right\|_{L^2(D)}}{\|\nabla u\|_{L^2(D)}}$} \\
			(N.L.)         &$k=0.5$                       &$k=1$                           &$k=5$                 &$k=0.5$                          &$k=1$             &$k=5$             \\
			\midrule
			$10^{-16}$ & $6.5\mathrm{e}-11$            &$3.8\mathrm{e}-10$         &$6.7\mathrm{e}-9$   & $1.6\mathrm{e}-9$            &$4.6\mathrm{e}-9$         &$2.8\mathrm{e}-8$\\
			$1\%$      & $3.4\mathrm{e}-4$             &$1.4\mathrm{e}-3$          &$9.5\mathrm{e}-3$   & $2.4\mathrm{e}-3$             &$6.3\mathrm{e}-3$        &$2.3\mathrm{e}-2$\\
			$5\%$     & $1.9\mathrm{e}-3$              &$5.3\mathrm{e}-3$          &$3.1\mathrm{e}-2$   & $9.7\mathrm{e}-3$             &$1.9\mathrm{e}-2$        &$6.8\mathrm{e}-2$\\
			\bottomrule
	\end{tabular}
	\label{tab-exm1}
\end{table}

\begin{table}
	\centering
	\caption{Errors on boundary $\Gamma$ with different noise levels for Example \ref{exm1}.} 
	\begin{tabular}{lcccccc}
		\toprule
		Noise level & \multicolumn{3}{c}{$\frac{\left\|u_n^{\alpha,\delta}-u\right\|_{L^2(\Gamma)}}{\|u\|_{L^2(\Gamma)}}$}
		&\multicolumn{3}{c}{ $\frac{\left\|\partial_{\nu} u_n^{\alpha,\delta}-\partial_\nu u\right\|_{L^2(\Gamma)}}{\|\partial_\nu u\|_{L^2(\Gamma)}}$} \\
		(N.L.)         &$k=0.5$                       &$k=1$                           &$k=5$                 &$k=0.5$                          &$k=1$             &$k=5$             \\
		\midrule
		$10^{-16}$ & $1.0\mathrm{e}-10$            &$7.8\mathrm{e}-10$         &$2.1\mathrm{e}-8$   & $3.2\mathrm{e}-9$             &$1.2\mathrm{e}-8$          &$1.1\mathrm{e}-7$\\
		$1\%$      & $1.5\mathrm{e}-3$             &$1.5\mathrm{e}-3$          &$1.7\mathrm{e}-2$   & $1.3\mathrm{e}-2$             &$8.1\mathrm{e}-2$          &$5.2\mathrm{e}-2$\\
		$5\%$     & $5.2\mathrm{e}-3$              &$7.5\mathrm{e}-3$          &$2.9\mathrm{e}-2$   & $3.0\mathrm{e}-2$             &$3.1\mathrm{e}-2$          &$7.8\mathrm{e}-2$\\
		\bottomrule
	\end{tabular}
	\label{tab1-exm1}
\end{table}

\begin{figure}
	\begin{center}
		\subfigure[]{\includegraphics[trim= 100 200 100 200, width=0.5\textwidth]{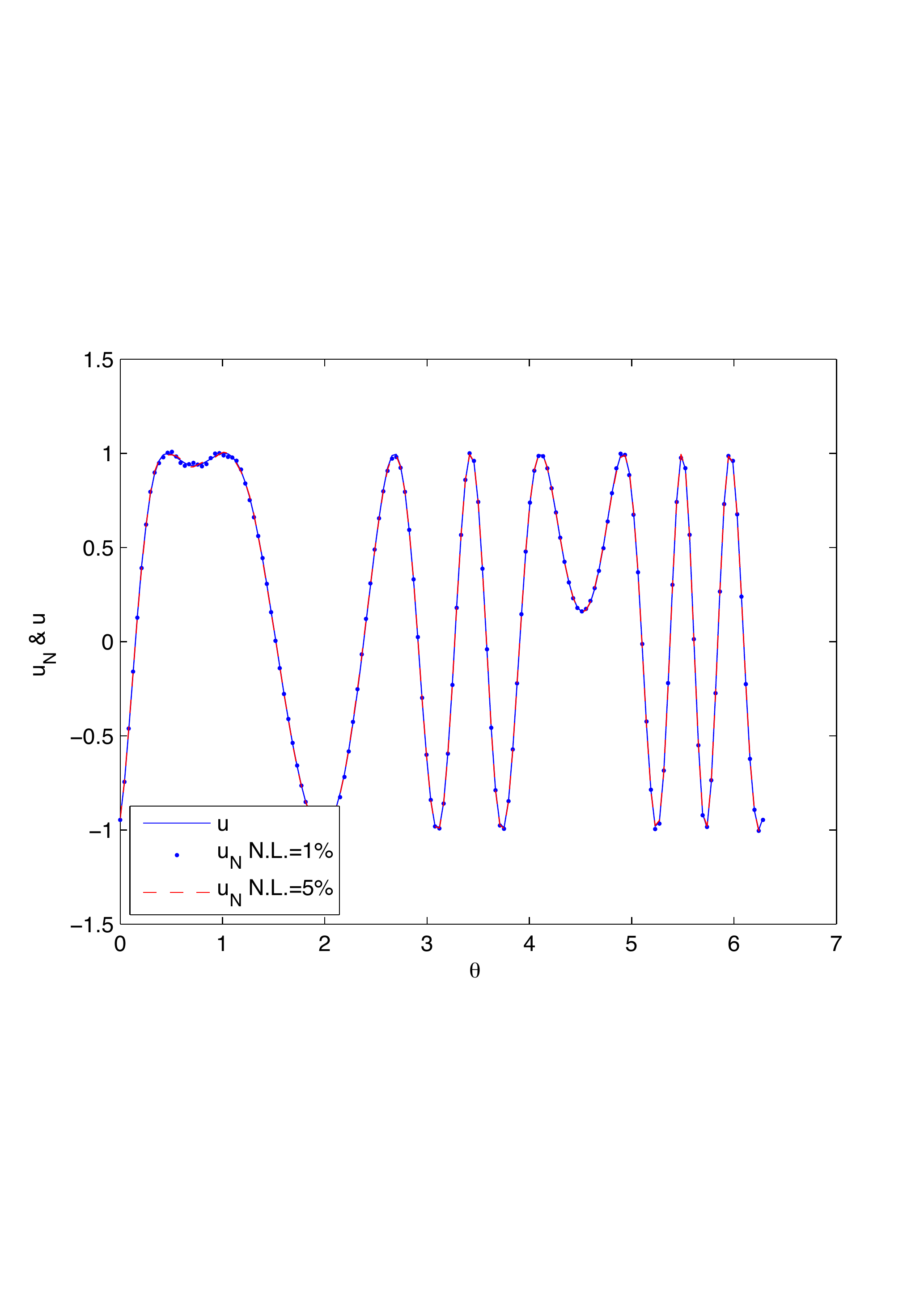}}\\
		\subfigure[]{\includegraphics[trim= 100 200 100 200, width=0.5\textwidth]{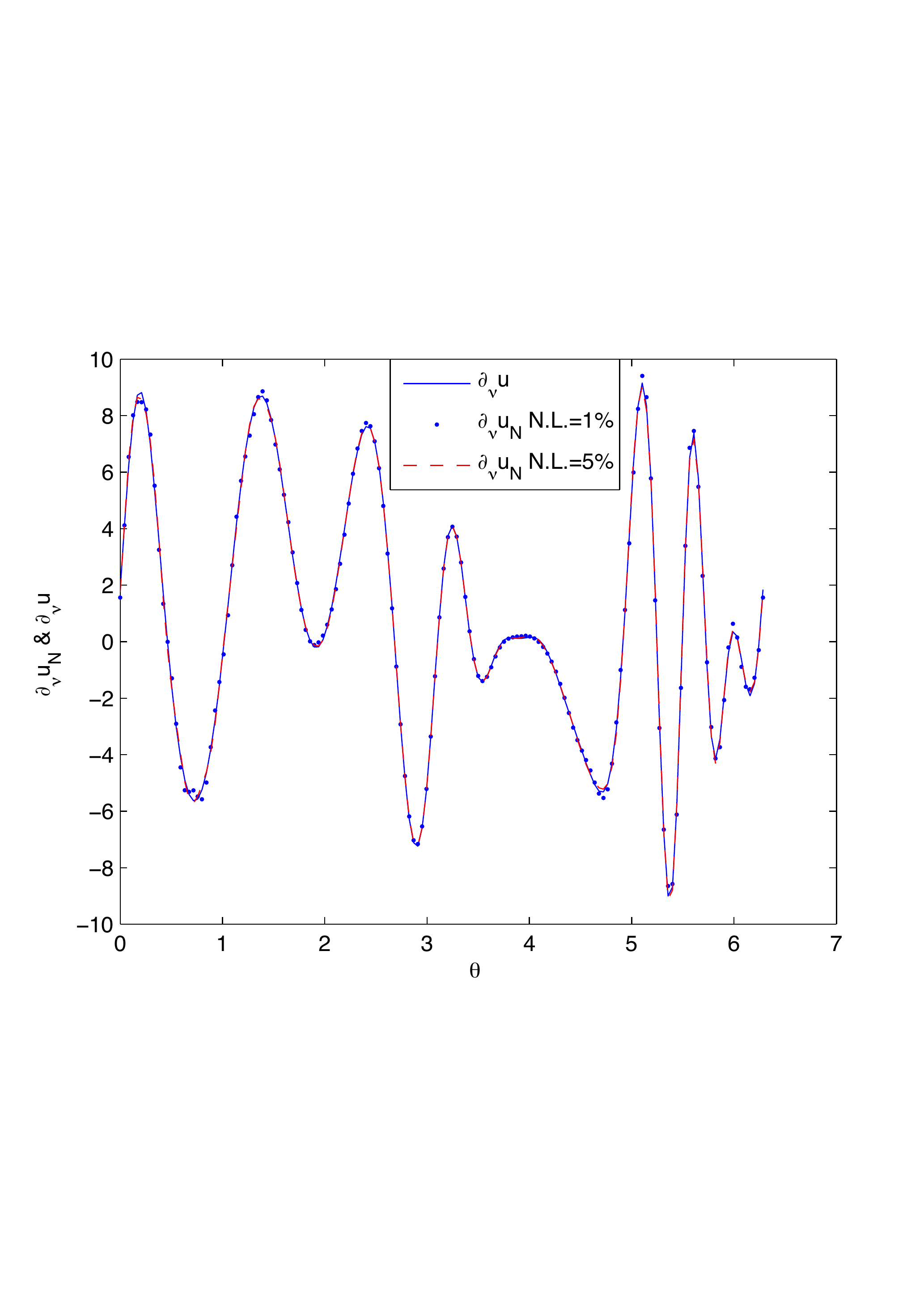}}
		\caption{The real parts of  $u$ and $u_N$ on $\Gamma$ for wave number $k=7$ with different noise levels for Example \ref{exm1}.}
		\label{F1:exm1}
	\end{center}
\end{figure}

\end{example}


\section{Conclusions}

In this paper, we study the numerical analysis for the Fourier-Bessel method to solve the BVP connected with the
Helmholtz equation.  Convergence and stability are analyzed with suitable choices of a regularization method. The method does not require
interior or surface meshing which makes it extremely attractive for solving problems under complicated boundary. We conducted some numerical experiments to show that the proposed method is stable and effective. We believe that our method should also work for high dimensional cases, and this extension is our future work.


\section*{Acknowledgements}

The research was supported by the National Natural Science Foundation of China [grant numbers 11671170, 11601107, 11671111 and 41474102].


\end{document}